%
%
%

\documentclass[graybox]{svmult}


\usepackage{type1cm}        
%
\usepackage{makeidx}         
\usepackage{graphicx}        
\usepackage{multicol}        
\usepackage[bottom]{footmisc}

\usepackage{newtxtext}       %
\usepackage[varvw]{newtxmath}       


\usepackage{nicefrac}
\usepackage{cite}
\usepackage[group-digits=true]{siunitx}
\usepackage{subcaption}
\usepackage{braket}
\usepackage[algo2e, noline, noend, algoruled]{algorithm2e}
\usepackage{hyperref}

\makeindex             


\begin{document}

\title*{Shape Optimization with Nonlinear Conjugate Gradient Methods}
\titlerunning{Shape Optimization with Nonlinear Conjugate Gradient Methods}
\author{Sebastian Blauth}
\institute{Sebastian Blauth \at Fraunhofer ITWM, Fraunhofer-Platz 1, 67663 Kaiserslautern \email{sebastian.blauth@itwm.fraunhofer.de}}
%
%
\maketitle

\vspace{-2cm}

{\centering
This is a post-peer-review, pre-copyedit version of an article published in Lecture Notes in Computational Science and Engineering, Volume 137. 
The final version is available online at \url{https://doi.org/10.1007/978-3-031-20432-6_9}.
}

\vspace{2cm}

\abstract*{In this chapter, we investigate recently proposed nonlinear conjugate gradient (NCG) methods for shape optimization problems. We briefly introduce the methods as well as the corresponding theoretical background and investigate their performance numerically. The obtained results confirm that the NCG methods are efficient and attractive solution algorithms for shape optimization problems.}

\abstract{In this chapter, we investigate recently proposed nonlinear conjugate gradient (NCG) methods for shape optimization problems. We briefly introduce the methods as well as the corresponding theoretical background and investigate their performance numerically. The obtained results confirm that the NCG methods are efficient and attractive solution algorithms for shape optimization problems.}

\section{Introduction}
\label{sblauth:sec:intro}

Shape optimization problems arise in many industrial applications, such as the design optimization of airplanes \cite{Schmidt2013Three}, automobiles \cite{Othmer2014Adjoint}, electric motors \cite{Gangl2015Shape}, microchannel systems \cite{Blauth2020Shape, Blauth2020Model}, polymer spin packs \cite{Hohmann2019Shape, Leithaeuser2018Designing}, and melting furnaces \cite{Leithaeuser2021Energy}. 
For the solution of such problems, shape optimization based on shape calculus (see, e.g., \cite{Delfour2011Shapes}) has attracted lots of research interest in recent years, with particular regards to the development of efficient solution algorithms. This can be seen, e.g., in \cite{Schulz2014Riemannian} and \cite{Schulz2016Efficient}, where Newton and limited memory BFGS (L-BFGS) methods for shape optimization are proposed, respectively, and in \cite{Etling2020First, Mueller2021novel}, where special mesh deformation procedures for increasing the mesh quality and the avoidance of remeshing are investigated.

In this chapter, we consider the recently proposed nonlinear conjugate gradient (NCG) methods for shape optimization from \cite{Blauth2021Nonlinear}, which are numerical solution algorithms for such problems, and investigate their performance. One of the benefits of these NCG methods is that they require only slightly more memory than the popular gradient descent method, while being significantly more efficient. Here, we compare the numerical performance of the NCG methods to the already established gradient descent and L-BFGS methods for shape optimization. For this numerical comparison we utilize our software package cashocs \cite{Blauth2021cashocs} which implements these methods and allows for a detailed comparison.

This chapter is structured as follows. In Sect.~\ref{sblauth:sec:theory}, we briefly present some theoretical background on shape optimization and shape calculus. This is required for the presentation of the nonlinear conjugate gradient methods in Sect.~\ref{sblauth:sec:ncg}. Finally, the numerical examples, which showcase the capabilities of the NCG methods, can be found in Sect.~\ref{sblauth:sec:numerics}.

\section{Theoretical Background}
\label{sblauth:sec:theory}

In this section, we present some theoretical background on shape optimization and shape calculus which we require for our presentation of the NCG methods in Sect.~\ref{sblauth:sec:ncg}.

\subsection{Fundamentals of Shape Optimization}
\label{sblauth:subsec:shape_opt}

A general shape optimization problem with a partial differential equation (PDE) as constraint is given by
\begin{equation}
	\label{sblauth:eq:sop}
	\begin{aligned}
		&\min_{\Omega, u} \mathcal{J}(\Omega, u) \\
		&\text{subject to} 
		&\quad \begin{aligned}[t]
			&e(\Omega, u) = 0, \\
			&\Omega \in \mathcal{A}.
		\end{aligned}
	\end{aligned}
\end{equation}
Here, $\mathcal{J}$ is a cost functional which we want to minimize over a set of admissible geometries $\mathcal{A} \subset \Set{\Omega \subset D}$ for some given bounded hold-all domain $D\subset \mathcal{R}^d$ and $u$ is the so-called state variable which lies in the state space $U(\Omega)$. Additionally, $e(\Omega, \cdot) \colon U(\Omega) \to V(\Omega)^*$, where $V(\Omega)^*$ is the dual space of $V(\Omega)$, is an operator that models the PDE constraint, which we interpret in the weak form
\begin{equation}
	\label{sblauth:eq:weak_state}
	\text{Find } u\in U(\Omega) \text{ such that} \quad \langle e(\Omega, u), v\rangle_{V(\Omega)*, V(\Omega)} = 0 \quad \text{ for all } v\in V(\Omega).
\end{equation}
We assume that the state equation \eqref{sblauth:eq:weak_state} admits a unique solution $u=u(\Omega)$ for all $\Omega \in \mathcal{A}$ so that we have $e(\Omega, u(\Omega)) = 0$. With this, we can introduce the reduced cost functional $J(\Omega) = \mathcal{J}(\Omega, u(\Omega))$ and rephrase \eqref{sblauth:eq:sop} equivalently as
\begin{equation}
	\label{sblauth:eq:reduced_shape}
	\min_{\Omega} J(\Omega) \quad \text{ subject to } \quad \Omega \in \mathcal{A},
\end{equation}
where we have formally eliminated the PDE constraint.

\subsection{Shape Calculus}

To obtain efficient solution algorithms for problem~\eqref{sblauth:eq:sop}, one can use techniques from shape calculus to derive sensitivities of the cost functional $J(\Omega)$ w.r.t. variations of the domain $\Omega$. In the following, we briefly recall these techniques and we refer to, e.g., \cite{Delfour2011Shapes} for an exhaustive treatment of this topic. 

We define a family of deformed domains $\Omega_t$ for $t\geq 0$ as
\begin{equation*}
	\Omega_t = F_t(\Omega) = \Set{F_t(x) | x\in \Omega}.
\end{equation*}
In this chapter, we use the so-called perturbation of identity (cf.\ \cite{Delfour2011Shapes}), in which the transformation $F_t$ is given by
\begin{equation*}
	F_t(x) = (I + t\mathcal{V})(x),
\end{equation*}
where $I$ denotes the identity matrix in $\mathbb{R}^d$ and $\mathcal{V}$ is a vector field in $C^k_0(D;\mathbb{R}^d)$ for some $k\geq 1$, i.e., the space of $k$-times continuously differentiable functions from $D$ to $\mathbb{R}^d$ with compact support. Note, that there exist other, equivalent approaches for calculating first order shape derivatives, such as the speed method (cf.\ \cite{Delfour2011Shapes}), but for the sake of brevity we only consider the perturbation of identity in this chapter. Now, we can define the shape derivative as follows (cf.\ \cite[Chapter 9, Definition 3.4]{Delfour2011Shapes}).
\begin{definition}
	Let $\tau > 0$ be sufficiently small, $\mathcal{A} \subset \Set{\Omega \subset D}$, $J\colon \mathcal{A} \to \mathbb{R}$, and $\Omega \in \mathcal{A}$. Additionally, let $\mathcal{V}\in C^k_0(D;\mathbb{R}^d)$ with $k\geq 1$, let $F_t = I + t\mathcal{V}$ be the perturbation of identity with $\mathcal{V}$, and assume that $\Omega_t = F_t(\Omega) \in \mathcal{A}$ for all $t\in [0, \tau]$.
	
	We say that $J$ has a Eulerian semi-derivative at $\Omega$ in direction $\mathcal{V}$ if the limit
	\begin{equation*}
		dJ(\Omega)[\mathcal{V}] := \lim\limits_{t\searrow 0} \frac{J(F_t(\Omega)) - J(\Omega)}{t} = \left. \frac{d}{dt} J(F_t(\Omega)) \right\rvert_{t=0^+}
	\end{equation*}
	exists. Moreover, let $\Xi$ be a topological vector subspace of $C^\infty_0(D;\mathbb{R}^d)$. We say that $J$ is shape differentiable at $\Omega$ w.r.t.\ $\Xi$ if it has a Eulerian semi-derivative at $\Omega$ in all directions $\mathcal{V} \in \Xi$ and, additionally, the mapping
	\begin{equation*}
			dJ(\Omega) \colon \Xi \to \mathbb{R}; \quad \mathcal{V} \mapsto dJ(\Omega)[\mathcal{V}]
	\end{equation*}
	is linear and continuous. In this case, we call $dJ(\Omega)[\mathcal{V}]$ the shape derivative of $J$ at $\Omega$ w.r.t.\ $\Xi$ in direction $\mathcal{V} \in \Xi$.
\end{definition}

Note, that there exist several possibilities for calculating shape derivatives in the context of PDE constrained shape optimization, an overview of which can, e.g., be found in \cite{Sturm2015Shape}. These methods involve solving a so-called adjoint equation to calculate the shape derivative, which is usual in PDE constrained optimization (cf.\ \cite{Hinze2009Optimization}).

An important result from shape calculus is Hadamard's structure theorem, which we briefly recall here.

\begin{theorem}[Structure Theorem]
	\label{sblauth:thm:structure}
	Let J be a shape functional which is shape differentiable at some $\Omega\subset D$ and let $\Gamma = \partial \Omega$ be compact. Further, let $k \geq 0$ be the smallest integer such that $dJ(\Omega) \colon C^\infty_0(D;\mathbb{R}^d) \to \mathbb{R};\ \mathcal{V} \mapsto dJ(\Omega)[\mathcal{V}]$ is continuous w.r.t. the $C^k_0(D;\mathbb{R}^d)$ topology, and assume that $\Gamma$ is of class $C^{k+1}$. Then, there exists a continuous, linear functional $g:C^k(\Gamma) \to \mathbb{R}$ such that
	\begin{equation*}
		dJ(\Omega)[\mathcal{V}] = g[\mathcal{V} \cdot n],
	\end{equation*}
	where $n$ is the outer unit normal vector on $\Gamma$. In particular, if $g\in L^1(\Gamma)$, it holds that
	\begin{equation*}
		dJ(\Omega)[\mathcal{V}] = \int_{\Gamma} g\ \mathcal{V}\cdot n \text{ d}s.
	\end{equation*}
\end{theorem}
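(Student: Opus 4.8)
The statement to prove is Hadamard's structure theorem. I would prove it in the classical way, which separates into two independent assertions: (i) the "support" claim that $dJ(\Omega)[\mathcal{V}]$ depends only on the normal trace $\mathcal{V}\cdot n$ on $\Gamma$, and (ii) the representation as an integral against an $L^1$ density when $g$ has that additional regularity. Claim (ii) is essentially a definition/consequence of the Riesz-type identification of a linear functional on $C^k(\Gamma)$ with an element of its dual, so the real content is (i). The core of (i) is to show that if $\mathcal{V}\in C^\infty_0(D;\mathbb{R}^d)$ satisfies $\mathcal{V}\cdot n = 0$ on $\Gamma$, then $dJ(\Omega)[\mathcal{V}] = 0$.

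\emph{Step 1: reduce to vanishing normal component.}
First I would observe that if $\mathcal{V}_1, \mathcal{V}_2 \in C^\infty_0(D;\mathbb{R}^d)$ have the same normal trace $\mathcal{V}_1\cdot n = \mathcal{V}_2\cdot n$ on $\Gamma$, then by linearity of $dJ(\Omega)$ it suffices to show $dJ(\Omega)[\mathcal{V}_1 - \mathcal{V}_2] = 0$, and $\mathcal{V} := \mathcal{V}_1 - \mathcal{V}_2$ has $\mathcal{V}\cdot n = 0$ on $\Gamma$. So the whole theorem rests on the following lemma: a smooth field that is everywhere \emph{tangential} along $\Gamma$ produces zero Eulerian semi-derivative. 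Given that, one defines $g$ on $C^k(\Gamma)$ by picking, for $\varphi \in C^k(\Gamma)$, any extension $\mathcal{V}_\varphi \in C^\infty_0(D;\mathbb{R}^d)$ with $\mathcal{V}_\varphi \cdot n = \varphi$ (such extensions exist since $\Gamma$ is $C^{k+1}$ and compact: take $\varphi\, \tilde n$ where $\tilde n$ is a smooth extension of the normal field to a tubular neighborhood, then cut off), and sets $g[\varphi] := dJ(\Omega)[\mathcal{V}_\varphi]$; the lemma makes this well defined, and linearity plus the assumed $C^k_0$-continuity of $dJ(\Omega)$ give linearity and continuity of $g$.

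\emph{Step 2: prove the tangential lemma.}
This is the main obstacle and uses the flow/speed-method characterization of the shape derivative rather than the perturbation of identity directly (they are known to agree for first-order derivatives, as the paper notes). Let $\mathcal{V}$ be tangential on $\Gamma$ and let $\Phi_t$ be the flow of $\mathcal{V}$, i.e.\ $\partial_t \Phi_t = \mathcal{V}(\Phi_t)$, $\Phi_0 = \mathrm{id}$. Because $\mathcal{V}$ is tangent to $\Gamma$ at every point of $\Gamma$, the hypersurface $\Gamma$ is invariant under the flow: $\Phi_t(\Gamma) = \Gamma$ for all small $t$, hence $\Phi_t(\Omega) = \Omega$ for all small $t$ (the flow is a diffeomorphism of $D$ fixing $\Gamma$ setwise and mapping the interior to the interior). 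Therefore $t \mapsto J(\Phi_t(\Omega))$ is the constant $J(\Omega)$, so its derivative at $t=0^+$ vanishes. Finally I invoke the equivalence of the perturbation-of-identity derivative and the velocity-method derivative (the two transformations $F_t = I + t\mathcal{V}$ and $\Phi_t$ differ by $O(t^2)$, so they yield the same limit $dJ(\Omega)[\mathcal{V}]$), to conclude $dJ(\Omega)[\mathcal{V}] = 0$. The delicate points here are the smooth extendability of the normal field (needing $\Gamma \in C^{k+1}$, which is exactly why that hypothesis appears) and justifying that the flow leaves $\Omega$ invariant; both are standard but should be stated carefully.

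\emph{Step 3: the integral representation.}
Once $g \in (C^k(\Gamma))^*$ is in hand, the displayed identity $dJ(\Omega)[\mathcal{V}] = g[\mathcal{V}\cdot n]$ is immediate from the definition of $g$ and Step 1. For the last assertion, if in addition $g$ is represented by an $L^1(\Gamma)$ function (again denoted $g$) — which happens precisely when the functional extends continuously to $C(\Gamma)$ and is absolutely continuous w.r.t.\ surface measure — then $g[\mathcal{V}\cdot n] = \int_\Gamma g\,(\mathcal{V}\cdot n)\,\mathrm{d}s$ by definition of that representation, giving the stated formula. I would present Steps 1 and 3 briefly and devote the bulk of the write-up to Step 2, since the invariance-of-$\Omega$-under-the-tangential-flow argument is where all the geometry lives.
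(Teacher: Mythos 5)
Your proposal is correct in outline and follows the classical Delfour--Zol\'esio argument; the paper gives no proof of its own beyond citing exactly that argument (\cite{Delfour2011Shapes}, Theorem~3.6 and Corollary~1), so you have reconstructed the intended proof: factor $dJ(\Omega)$ through the normal trace by showing tangential fields yield a vanishing derivative (via invariance of $\Gamma$, hence of $\Omega$, under the flow, plus the equivalence of the velocity method and the perturbation of identity at first order), then read off the $L^1$ representation. One technical repair is needed in your Step~1: for a general $\varphi \in C^k(\Gamma)$ the field $\varphi\,\tilde n$ is only $C^k$ (indeed $n$ itself is only $C^k$ when $\Gamma$ is $C^{k+1}$), so it is not an admissible argument of $dJ(\Omega)$, whose domain is $C^\infty_0(D;\mathbb{R}^d)$; you must therefore define $g$ first on the dense subspace of normal traces of smooth fields and extend it to all of $C^k(\Gamma)$ by continuity --- which is precisely the role of the hypothesis that $dJ(\Omega)$ is continuous with respect to the $C^k_0(D;\mathbb{R}^d)$ topology.
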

\begin{proof}
	The proof can be found in \cite[Theorem~3.6 and Corollary 1, pp. 479--481]{Delfour2011Shapes}.
\end{proof}

\subsection{A Riemannian View on Shape Optimization and Steklov-Poincar\'e-Type Metrics}

In order to formulate the nonlinear CG methods for shape optimization, we now briefly recall the Riemannian view on shape optimization from \cite{Schulz2014Riemannian} as well as the corresponding Steklov-Poincar\'e-type metrics from \cite{Schulz2016Efficient}. 

We consider compact and connected subsets $\Omega \subset D \subset \mathbb{R}^2$ with $C^\infty$ boundary, where $D$ is, again, some bounded hold-all domain. As in \cite{Schulz2014Riemannian}, we define 
\begin{equation*}
	B_e(S^1; \mathbb{R}^2) := \text{Emb}(S^1;\mathbb{R}^2) / \text{Diff}(S^1),
\end{equation*}
i.e., the set of all equivalence classes of $C^\infty$ embeddings of the unit circle $S^1 \subset \mathbb{R}^2$ into $\mathbb{R}^2$, given by $\text{Emb}(S^1;\mathbb{R}^2)$, where the equivalence relation is defined via the set of all $C^\infty$ diffeomorphisms of $S^1$ into itself, given by $\text{Diff}(S^1)$. Note, that this equivalence relation factors out reparametrizations as these do not change the underlying shape. It is shown in \cite{Kriegl1997convenient} that $B_e$, which is the set of all smooth two-dimensional shapes, is a smooth manifold. An element of $B_e(S^1;\mathbb{R}^2)$ is represented by a smooth curve $\Gamma\colon S^1 \to \mathbb{R}^2;\ \theta \mapsto \Gamma(\theta)$. Due to the equivalence relation, the tangent space at $\Gamma \in B_e$ is isomorphic to the set of all $C^\infty$ normal vector fields along $\Gamma$, i.e.,
\begin{equation*}
		T_\Gamma B_e \cong \Set{h | h=\alpha n, \alpha\in C^\infty(\Gamma;\mathbb{R})} \cong \Set{\alpha | \alpha \in C^\infty(\Gamma;\mathbb{R})}.
\end{equation*}

As in \cite{Schulz2016Efficient}, we consider the following Steklov-Poincar\'e-type metric $g^S_\Gamma$ at some $\Gamma \in B_e$, which is defined as
\begin{equation}
	\label{sblauth:eq:def_riemannian}
		g^S_\Gamma \colon H^{\nicefrac{1}{2}}(\Gamma) \times H^{\nicefrac{1}{2}}(\Gamma) \to \mathbb{R};\ (\alpha, \beta) \mapsto \int_{\Gamma} \alpha \left(S^p_\Gamma\right)^{-1} \beta \text{ d}s.
\end{equation}
Here, $S^p_\Gamma$ is a symmetric and coercive operator defined by
\begin{equation*}
		S^p_\Gamma \colon H^{-\nicefrac{1}{2}}(\Gamma) \to H^{\nicefrac{1}{2}}(\Gamma);\ \alpha \mapsto U \cdot n,
\end{equation*}
where $U \in H^1(\Omega)^d$ solves the problem
\begin{equation}
	\label{sblauth:eq:sp_bilinear}
		\text{Find } U \in H^1(\Omega)^d \text{ such that } \quad a_\Omega(U, V) = \int_{\Gamma} \alpha \left(V\cdot n\right) \text{ d}s \quad \text{ for all } V \in H^1(\Omega)^d,
\end{equation}
for a symmetric, continuous, and coercive bilinear form $a_\Omega\colon H^1(\Omega)^d \times H^1(\Omega)^d \to \mathbb{R}$. To define a Riemannian metric on $B_e$, we restrict $g^S$ to the tangent space $T_\Gamma B_e$.

Let us now briefly discuss the relation between the metric $g^S_\Gamma$ and shape calculus. To do so, we assume that the shape functional $J$ is shape differentiable and has a shape derivative of the form
\begin{equation*}
	dJ(\Omega)[\mathcal{V}] = \int_{\Gamma} g\ \mathcal{V}\cdot n \text{ d}s,
	\end{equation*}
with $g \in L^2(\Gamma)$ (cf.\ Theorem~\ref{sblauth:thm:structure}). Then, the Riemannian shape gradient w.r.t.\ $g^S_\Gamma$ is given by $\gamma \in T_\Gamma B_e$, which is the solution of
\begin{equation}
	\label{sblauth:eq:riesz_riemannian}
		\text{Find } \gamma \in T_\Gamma B_e \text{ such that }
		\quad g^S_\Gamma(\gamma, \phi) = \int_{\Gamma} g \phi \text{ d}s \quad \text{ for all } \phi \in T_\Gamma B_e.
\end{equation}
Due to the definition of $g^S_\Gamma$, the solution of \eqref{sblauth:eq:riesz_riemannian} is given by $\gamma = S^p_\Gamma g$, in particular, we have that $\gamma = \mathcal{G} \cdot n$, where $\mathcal{G}$ solves
\begin{equation}
	\label{sblauth:eq:def_gradient_defo}
	\text{Find } \mathcal{G} \in H^1(\Omega)^d \text{ such that } \quad a_\Omega(\mathcal{G}, \mathcal{V}) = dJ(\Omega)[\mathcal{V}] \quad \text{ for all } \mathcal{V} \in H^1(\Omega)^d.
\end{equation}
Due to the Lax-Milgram lemma, this problem has a unique solution $\mathcal{G}$ which we call the gradient deformation of $J$ at $\Omega$. The gradient deformation $\mathcal{G}$ can be interpreted as an extension of the shape gradient $\gamma$ to the entire domain $\Omega$. Since $a_\Omega$ is coercive, there exists a constant $C>0$ so that
\begin{equation}
	\label{chap4:eq:gradient_defo_descent}
	dJ(\Omega)[-\mathcal{G}] = a_\Omega(-\mathcal{G}, \mathcal{G}) \leq -C \left\lvert\lvert \mathcal{G} \right\rvert\rvert_{H^1(\Omega)}^2 \leq 0,
\end{equation}
i.e., an infinitesimal perturbation of identity with the negative gradient deformation yields a descent in the shape functional $J$. This fact is often used for the solution of shape optimization problems with a gradient descent method (see, e.g., \cite{Etling2020First, Hohmann2019Shape}).

%
%


\section{Nonlinear CG Methods for Shape Optimization}
\label{sblauth:sec:ncg}

We now use the theoretical framework introduced in Sect.~\ref{sblauth:sec:theory} to formulate our NCG methods for shape optimization from \cite{Blauth2021Nonlinear}. To do so, we consider only reduced shape optimization problems of the form \eqref{sblauth:eq:reduced_shape}. For the sake of brevity, we only focus on a theoretical description of the methods. However, a detailed description of our numerical implementation of the methods can be found in our previous work \cite{Blauth2021Nonlinear}.


Starting from an initial guess $\Omega_0$, the $k$-th iteration of the NCG methods consists of the following steps: First, we compute the shape derivative $dJ(\Omega)[\cdot]$ of our cost functional $J$ which involves solving the state and adjoint equation on the domain $\Omega_k$ (cf.\ \cite{Blauth2021Nonlinear}). Next, we compute the Riemannian shape gradient $\gamma_k$ via equation~\eqref{sblauth:eq:riesz_riemannian}. Note, that this involves the computation of the gradient deformation (cf.\ equation~\eqref{sblauth:eq:def_gradient_defo}), which is beneficial for our numerical implementation of the method as described detailedly in \cite{Blauth2021Nonlinear}. The next step of the methods involves the computation of the search direction $\delta_k$, which is defined as
\begin{equation*}
	\delta_k = -\gamma_k + \beta_k \mathcal{T}_{\eta_{k-1}} \delta_{k-1},
\end{equation*}
where we set $\delta_{0} = -\gamma_0$. Here, the parameter $\eta_{k-1}$ is defined as $\eta_{k-1} = t_{k-1}\delta_{k-1}$ with the previous step size $t_{k-1}$ (see below) and $\mathcal{T}$ denotes a vector transport (see, e.g., \cite{Absil2008Optimization, Ring2012Optimization} for more details). Moreover, the parameter $\beta_k$ is an update parameter for the NCG method, which is detailed below. After we have obtained the search direction, we have to compute a feasible step size for updating our domain. This can be accomplished with, e.g., an Armijo line search (cf.\ \cite{Blauth2021Nonlinear} for more details) and yields the step size $t_k$ for iteration $k$. The geometry is then updated with the help of an retraction $R$ (see, e.g., \cite{Absil2008Optimization, Ring2012Optimization} for more details), i.e.,
\begin{equation*}
	\Gamma_{k+1} = R_{\Gamma_k} \eta_k,
\end{equation*}
where $\eta_k$ is the scaled search direction for iteration $k$, i.e., $\eta_k = t_k \delta_k$. Finally, we increment $k$ and proceed as before, until an appropriate stopping criterion is reached (cf. \cite{Blauth2021Nonlinear}). Note, that an algorithmic description of this procedure is given in Algorithm~\ref{sblauth:algo:ncg}.

\begin{algorithm2e}[!t]
	\KwIn{Initial geometry $\Omega_0$, represented by its boundary $\Gamma_0$}
	\For{k=0,1,2,\dots, $k_\text{max}$}{
		Compute the shape derivative $dJ(\Omega_k)[\cdot]$ \\
		Compute the Riemannian shape gradient $\gamma_{k}$ by solving \eqref{sblauth:eq:riesz_riemannian} \\
		\If{Stopping criterion is satisfied}{
			Stop with approximate solution $\Gamma_{k}$
		}
		Compute the search direction $\delta_{k} = -\gamma_k + \beta_k \mathcal{T}_{\eta_{k-1}} \delta_{k-1}$ \\
		Compute a feasible step size $t_{k}$ \\
		Set $\eta_k = t_k \delta_k$ and update the geometry $\Gamma_{k+1} = R_{\Gamma_k} \eta_k$
	}
	\caption{Nonlinear CG methods for shape optimization.}
	\label{sblauth:algo:ncg}
\end{algorithm2e}

We consider five different NCG variants, namely the NCG methods of Fletcher and Reeves (FR) \cite{Fletcher1964Function}, Polak and Ribiere (PR) \cite{Polak1969Note}, Hestenes and Stiefel (HS) \cite{Hestenes1952Methods}, Dai and Yuan (DY) \cite{Dai1999nonlinear}, and Hager and Zhang (HZ) \cite{Hager2005new}. Note, that an overview over these methods for finite-dimensional problems can be found, e.g., in \cite{Hager2006survey}. The update parameters $\beta$ in the context of shape optimization are given by
\allowdisplaybreaks
\begin{align*}
		\beta^\mathrm{FR}_k &= \frac{g^S_{\Gamma_k}\left(\gamma_k , \gamma_k \right)}{g^S_{\Gamma_k} \left( \mathcal{T}_{\eta_{k-1}} \gamma_{k-1},\mathcal{T}_{\eta_{k-1}} \gamma_{k-1}  \right)}, \\
		\beta^\mathrm{PR}_k &= \frac{g^S_{\Gamma_k} \left( \gamma_k , y_{k-1} \right) }{g^S_{\Gamma_k} \left( \mathcal{T}_{\eta_{k-1}} \gamma_{k-1},\mathcal{T}_{\eta_{k-1}} \gamma_{k-1}  \right)}, \\
		\beta^\mathrm{HS}_k &= \frac{g^S_{\Gamma_k} \left( \gamma_k , y_{k-1} \right) }{g^S_{\Gamma_k} \left( \mathcal{T}_{\eta_{k-1}} d_{k-1}, y_{k-1} \right)}, \\
		\beta^\mathrm{DY}_k &= \frac{g^S_{\Gamma_k}\left(\gamma_k , \gamma_k \right)}{g^S_{\Gamma_k} \left( \mathcal{T}_{\eta_{k-1}} d_{k-1}, y_{k-1} \right)}, \\
		\beta^\mathrm{HZ}_k &= g^S_{\Gamma_k}\left( y_{k-1} - 2\mathcal{T}_{\eta_{k-1}} d_{k-1} \frac{g^S_{\Gamma_k} \left(y_{k-1} ,y_{k-1} \right)}{g^S_{\Gamma_k}\left( \mathcal{T}_{\eta_{k-1}} d_{k-1}, y_{k-1} \right) } , \frac{\gamma_k}{g^S_{\Gamma_k}\left( \mathcal{T}_{\eta_{k-1}} d_{k-1}, y_{k-1} \right) } \right),
\end{align*}
where we use
\begin{equation*}
	y_{k-1} = \gamma_k - \mathcal{T}_{\eta_{k-1}} \gamma_{k-1}.
\end{equation*}

A particular advantage of the NCG methods is the following: The NCG methods only require one or two additional vectors of storage compared to the popular gradient descent method, while being substantially more efficient, as is shown in Sect.~\ref{sblauth:sec:numerics}. The L-BFGS methods with memory size $m$, on the other hand, require $2m$ additional vectors of storage, which can be prohibitive for very large scale problems, such as the ones arising from industrial applications (see \cite{Kelley1999Iterative}). Hence, the NCG methods are particularly interesting for these kinds of problems, where memory requirements are of great importance.


\section{Numerical Examples}
\label{sblauth:sec:numerics}

In this section, we investigate the previously introduced NCG methods numerically on two benchmark problems. In Sect.~\ref{sblauth:ssec:poisson}, we consider a two-dimensional shape optimization problem with a Poisson equation as PDE constraint and in Sect.~\ref{sblauth:ssec:flow}, we consider the drag minimization in a three-dimensional pipe. For both test cases, we compare the five NCG variants from Sect.~\ref{sblauth:sec:ncg} to the gradient descent and L-BFGS methods. The numerical implementation is done in our software cashocs \cite{Blauth2021cashocs}, which is based on the finite element software FEniCS \cite{Logg2012Automated, Alnes2015FEniCS}, and we refer to our previous work \cite{Blauth2021Nonlinear} for a comprehensive description of our implementation of the NCG methods.

\subsection{Shape Optimization with a Poisson Equation}
\label{sblauth:ssec:poisson}

The first test case is taken from \cite{Etling2020First, Blauth2021Nonlinear} and is given by 
\begin{equation}
	\label{sblauth:eq:poisson}
	\begin{aligned}
		&\min_\Omega J(\Omega, u) = \int_\Omega u \text{ d}x\\
		&\text{subject to } \quad \begin{alignedat}[t]{2}
				-\Delta u &= f \quad &&\text{ in } \Omega, \\
				u &= 0 \quad &&\text{ on } \Gamma,
			\end{alignedat}
	\end{aligned}
\end{equation}
where we consider the problem in two dimensions and use
\begin{equation*}
	f(x) = 2.5 \left( x_1 + 0.4 - x_2^2 \right)^2 + x_1^2 + x_2^2 - 1.
\end{equation*}
Our initial guess $\Omega_0$ is given by the unit disc in $\mathbb{R}^2$. We discretize the PDE constraint with piecewise linear Lagrange elements, for which we use a uniform mesh consisting of \num{7651} nodes and \num{15000} triangles.

We solve this problem with the gradient descent (GD) method, a L-BFGS method with memory size $m=5$ (L-BFGS 5), and the five NCG methods presented in Sect.~\ref{sblauth:sec:ncg}. The history of the optimization can be seen in Fig.~\ref{sblauth:fig:poisson_history}, where the evolution of the cost functional (Fig.~\ref{sblauth:fig:poisson_functional}) and relative gradient norm (Fig.~\ref{sblauth:fig:poisson_norm}) are shown. Here, we have highlighted the graphs of the gradient descent, L-BFGS 5, and the NCG variant of Dai and Yuan (NCG DY), as the latter performed best of all NCG methods. For the sake of better readability, the remaining NCG methods are shown in transparent colors. Here, we observe that all NCG methods perform significantly better than the gradient descent method, as they reach the optimal function value faster and also have lower gradient norms throughout the optimization. On average, the NCG methods achieve a gradient norm that is one order of magnitude smaller than the gradient norm obtained with the gradient descent method. However, the performance of the L-BFGS 5 method is still slightly better than that of the NCG methods, but this comes at the cost of a higher memory usage, as discussed previously.

\begin{figure}[b]
	\centering
	\begin{subfigure}[t]{0.49\textwidth}
		\centering
		\includegraphics[width=\textwidth]{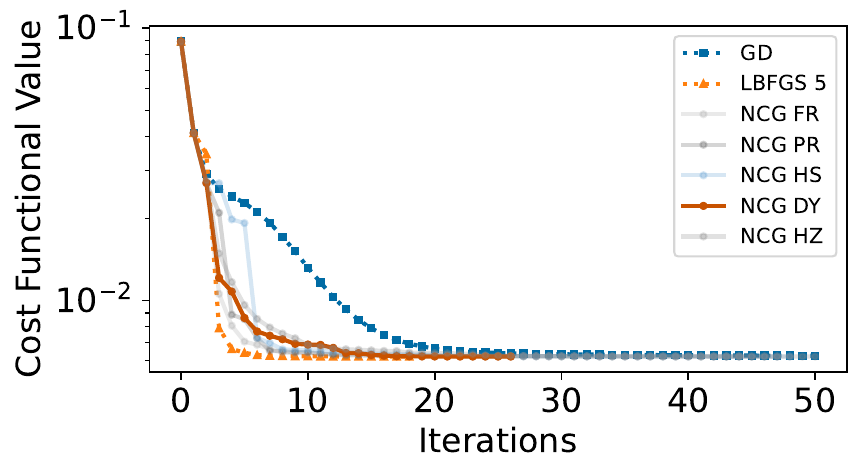}
		\caption{Cost functional (shifted by +0.1).}
		\label{sblauth:fig:poisson_functional}
	\end{subfigure}
	\hfil
	\begin{subfigure}[t]{0.49\textwidth}
		\centering
		\includegraphics[width=\textwidth]{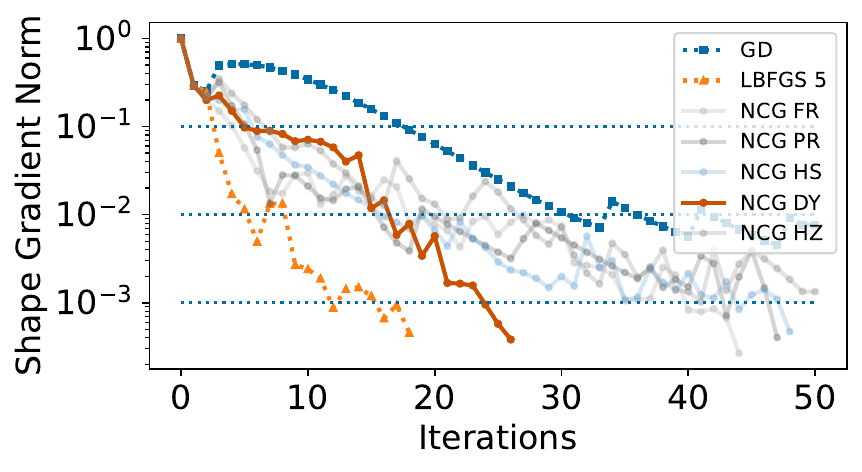}
		\caption{Relative gradient norm.}
		\label{sblauth:fig:poisson_norm}
	\end{subfigure}
	\caption{History of the optimization methods for problem~\eqref{sblauth:eq:poisson}.}
	\label{sblauth:fig:poisson_history}
\end{figure}

\begin{figure}[t]
	\centering
	\begin{subfigure}[t]{0.275\textwidth}
		\includegraphics[width=1\textwidth]{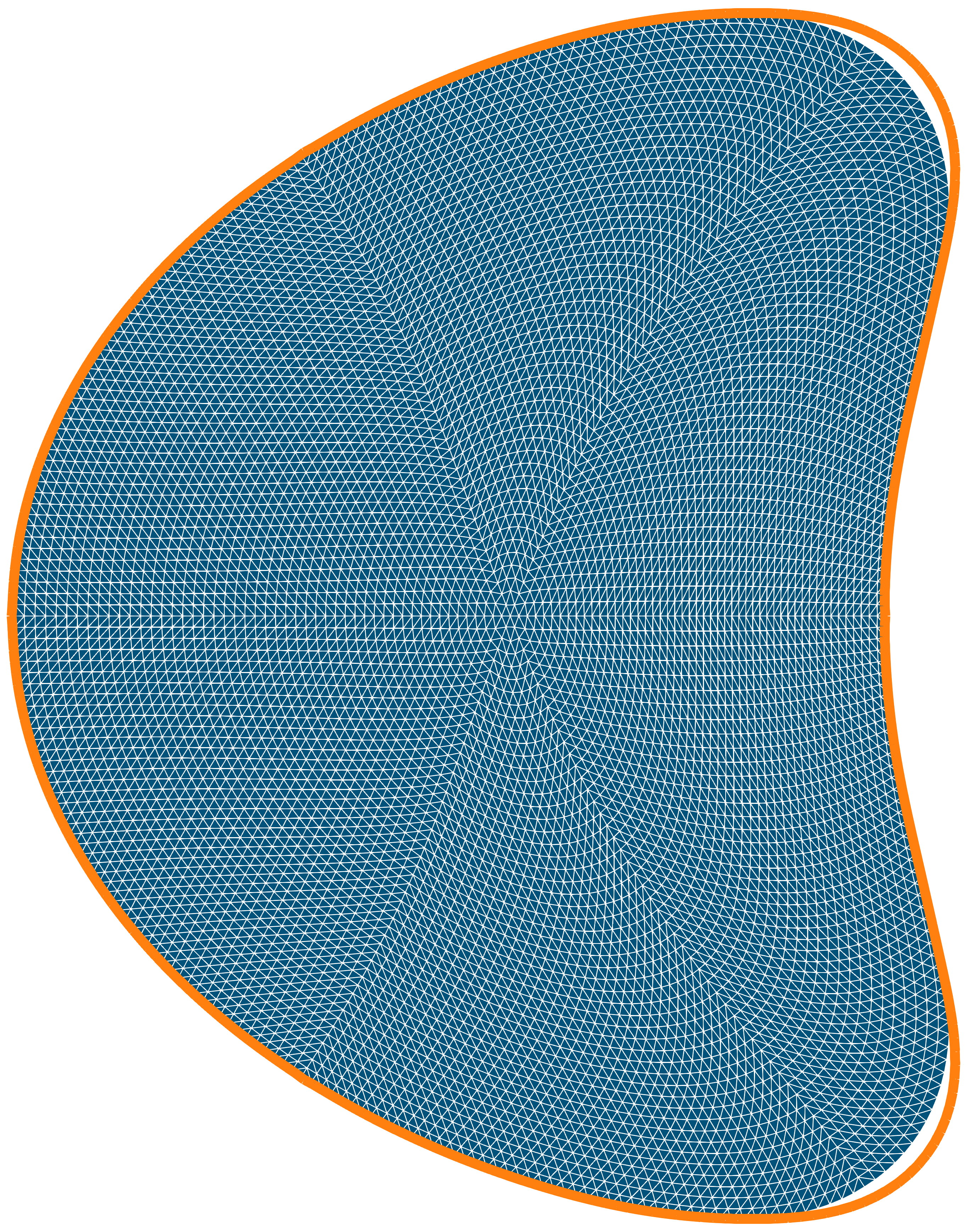}
		\caption{Gradient descent method.}
	\end{subfigure}
	\hfil
	\begin{subfigure}[t]{0.275\textwidth}
		\includegraphics[width=1\textwidth]{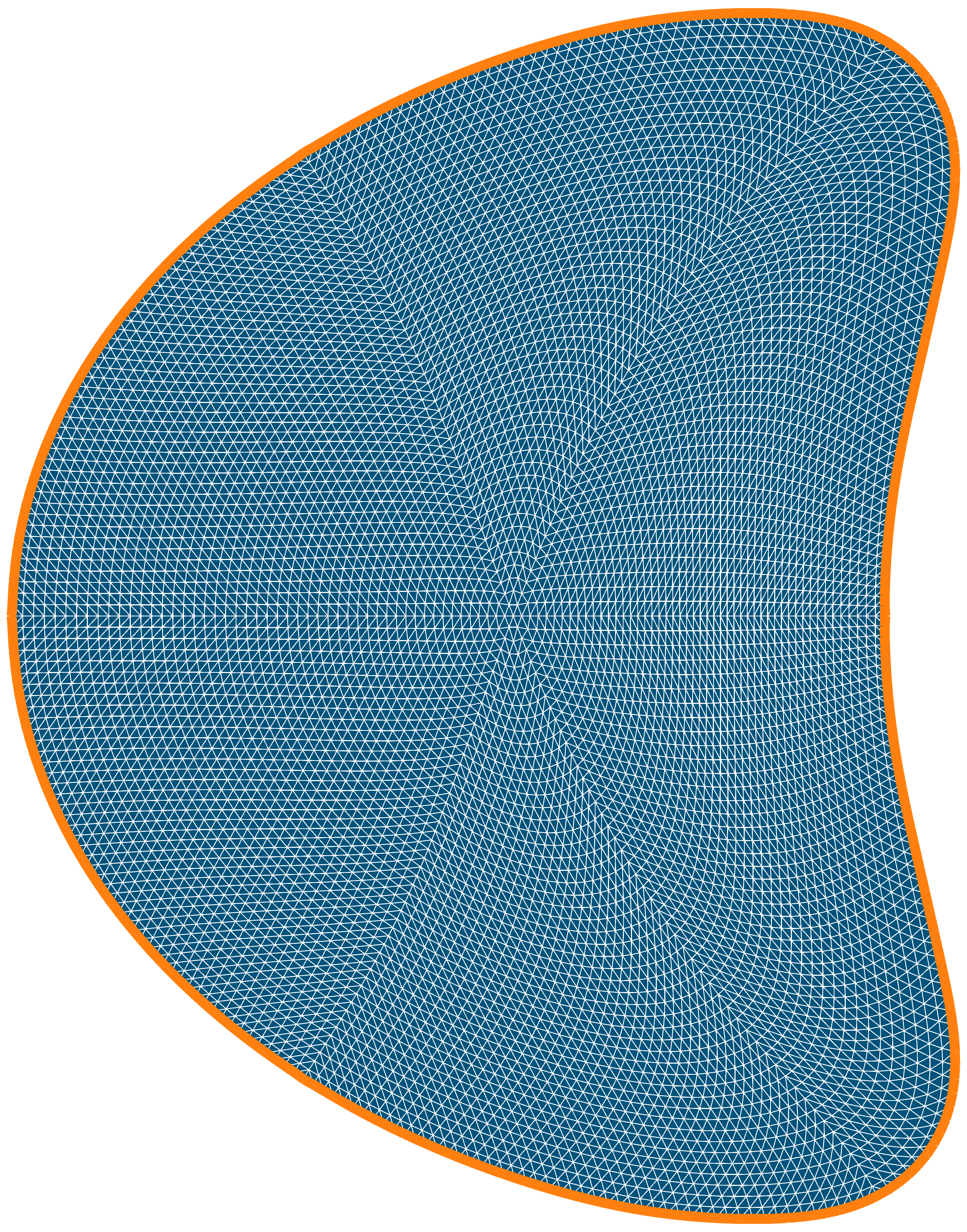}
		\caption{Fletcher-Reeves NCG method.}
	\end{subfigure}
	\hfil
	\begin{subfigure}[t]{0.275\textwidth}
		\includegraphics[width=1\textwidth]{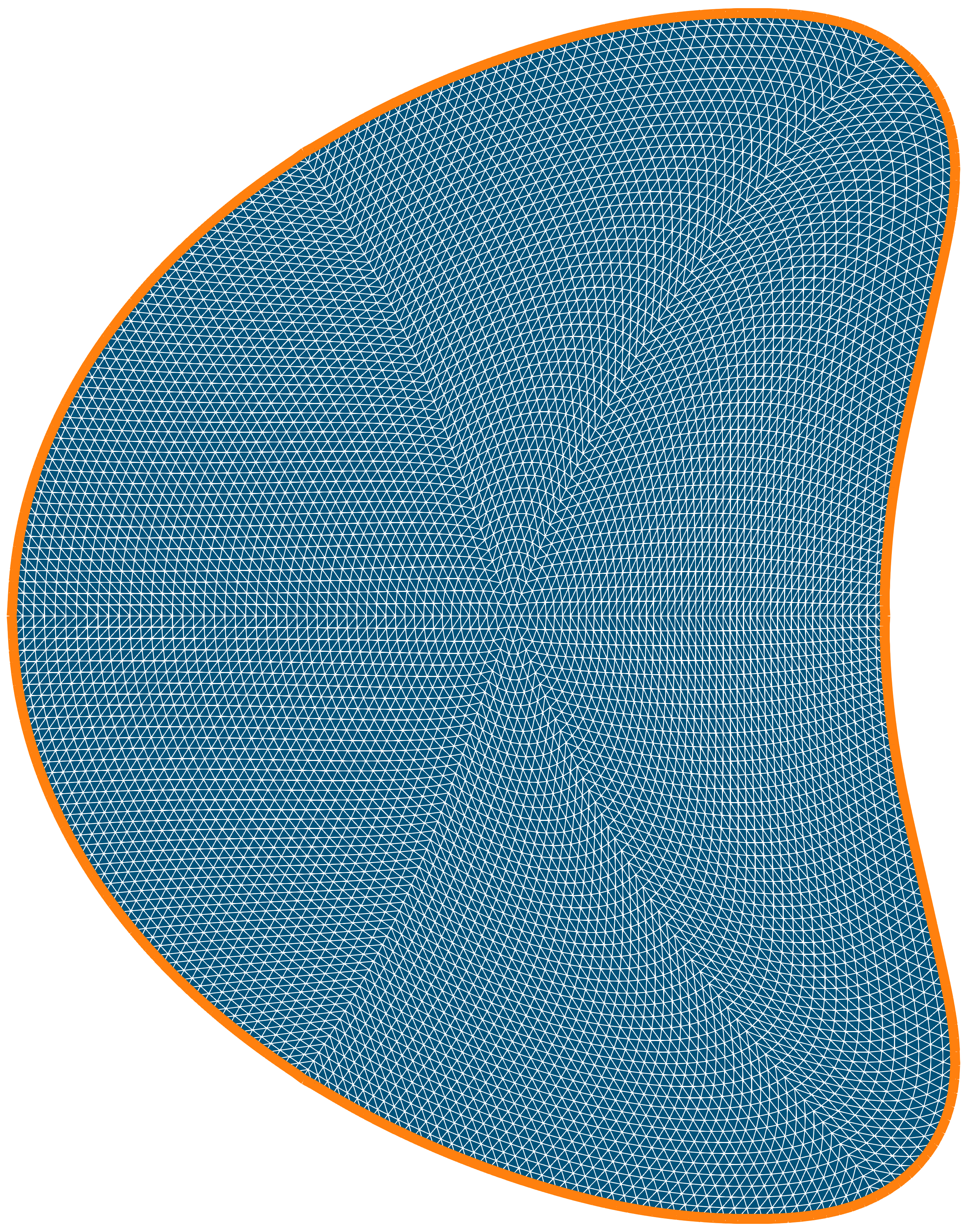}
		\caption{Polak-Ribi\`ere NCG method.}
	\end{subfigure}
	\caption{Optimized Shapes (blue) compared to the reference solution (orange) for the Poisson problem~\eqref{sblauth:eq:poisson}.}
	\label{sblauth:fig:poisson_geom}
\end{figure}

A visual inspection of the optimized geometries, which can be found in Fig.~\ref{sblauth:fig:poisson_geom}, shows that the NCG methods perform very well. Here, the optimized geometries for the gradient descent, Fletcher-Reeves NCG and Polak-Ribiere NCG methods are shown and compared to the reference solution (computed with the L-BFGS 5 method). We observe, that there are still visible differences between the reference solution and the one obtained with the gradient descent method, particularly in the right corners of the geometry. The two NCG methods, however, show no visible differences compared to the reference domain, which underlines their improved convergence behavior compared to the gradient descent method.

\subsection{Shape Optimization of Energy Dissipation in a Pipe}
\label{sblauth:ssec:flow}

For our second test case, we consider the problem of reducing the energy dissipation of a fluid in a three-dimensional pipe, which is taken from \cite{Paganini2021Fireshape}. Here, the flow of the fluid is governed by the Navier-Stokes equations. The corresponding shape optimization problem is given by
\begin{equation}
	\label{sblauth:eq:navier_stokes}
	\begin{aligned}
		&\min_\Omega\ J(\Omega, u) = \frac{1}{\mathrm{Re}} \int_\Omega \varepsilon(u) : \varepsilon(u) \text{ d}x + \frac{\gamma}{2} \left( \int_\Omega 1 \text{ d}x - \int_{\Omega_0} 1 \text{ d}x \right)^2 \\
		&\text{subject to } \quad \begin{alignedat}[t]{2}
			-\frac{2}{\mathrm{Re}} \nabla \cdot \varepsilon(u) + \left(u\cdot \nabla\right) u + \nabla p &= 0 \quad &&\text{ in } \Omega, \\
			\nabla \cdot u &= 0 \quad &&\text{ in } \Omega, \\
			u &= u^\mathrm{in} \quad &&\text{ on } \Gamma^\mathrm{in},\\
			u &= 0 \quad &&\text{ on } \Gamma^\mathrm{wall},\\
			\frac{2}{\mathrm{Re}} \varepsilon(u) n - pn &= 0 \quad &&\text{ on } \Gamma^\mathrm{out}.
		\end{alignedat}
	\end{aligned}
\end{equation}
\begin{figure}[t]
	\centering
	\begin{subfigure}[t]{0.49\textwidth}
		\centering
		\includegraphics[width=\textwidth]{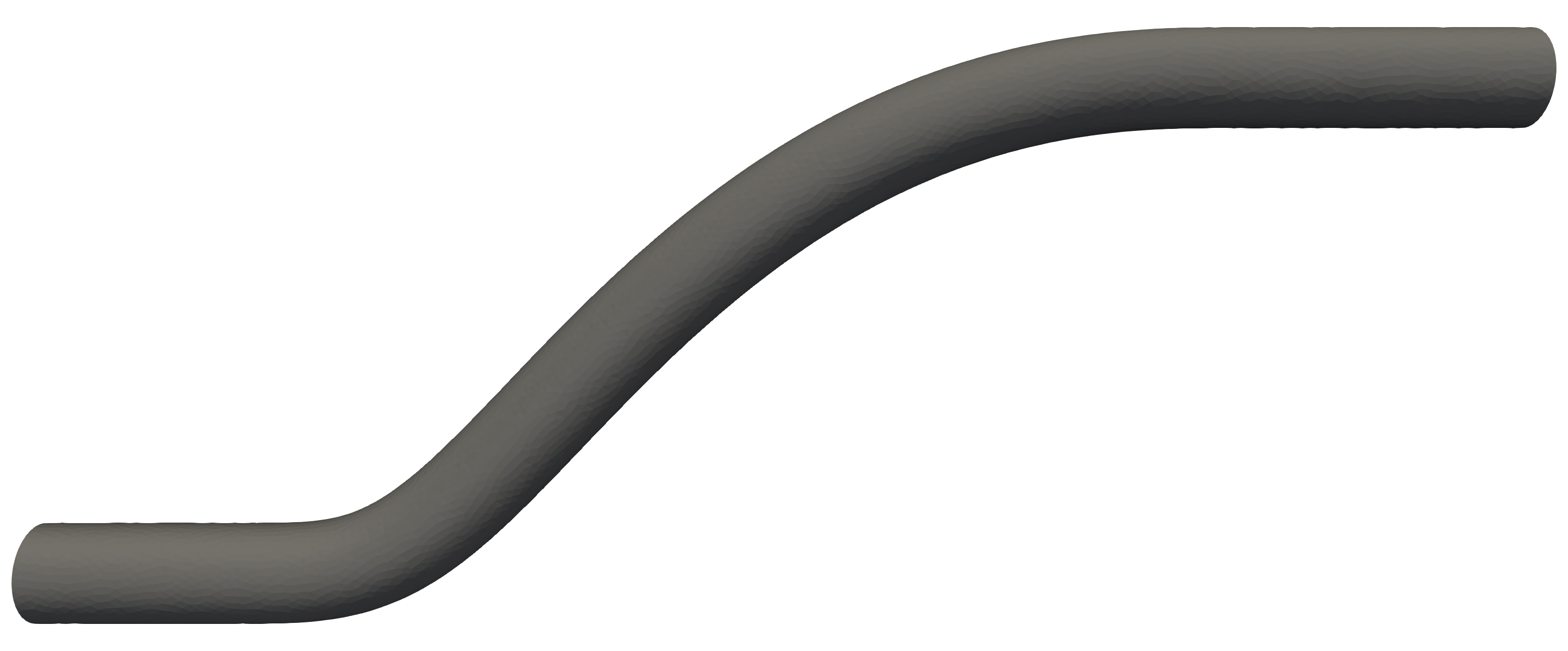}
		\caption{Initial geometry of the pipe.}
		\label{sblauth:fig:initial_pipe}
	\end{subfigure}
	\hfil
	\begin{subfigure}[t]{0.49\textwidth}
		\centering
		\includegraphics[width=\textwidth]{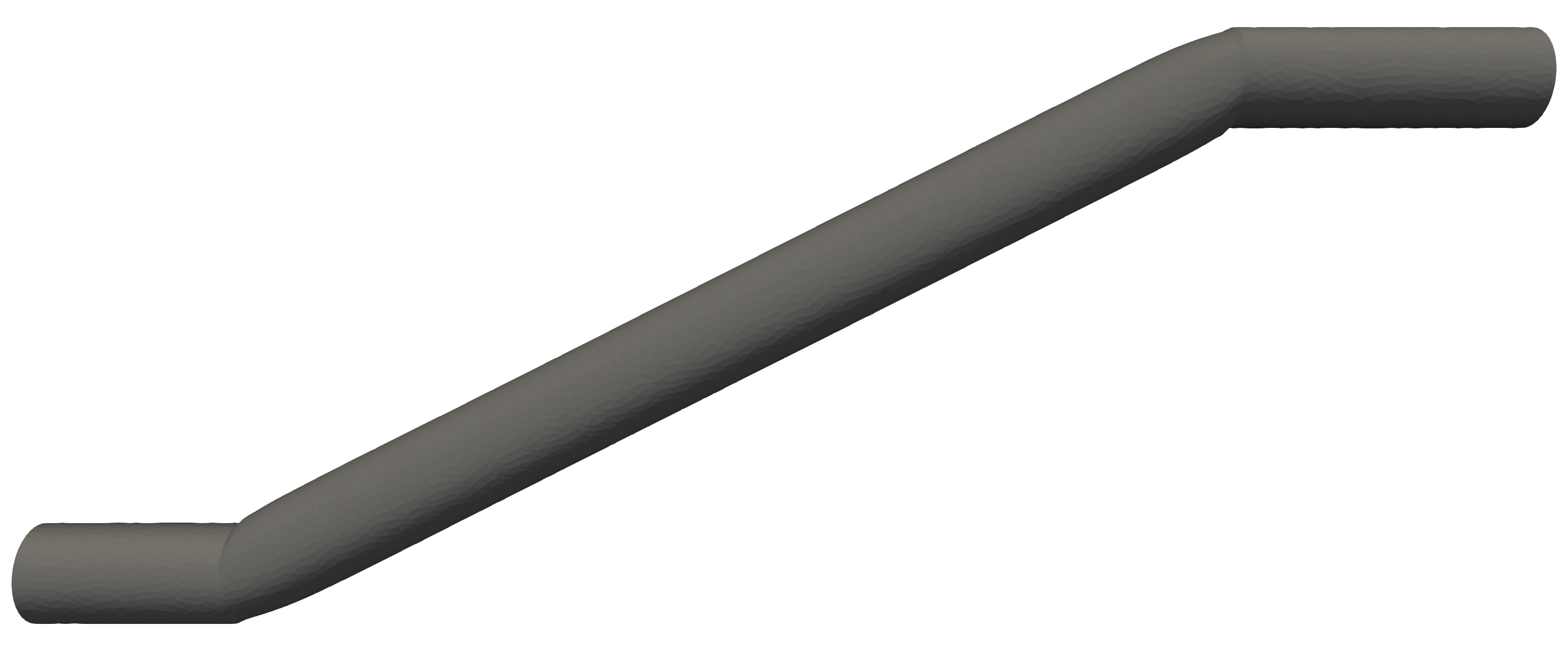}
		\caption{Optimized geometry of the pipe (obtained with the NCG HS method).}
		\label{sblauth:fig:optimized_pipe}
	\end{subfigure}
	\caption{Initial and optimized geometries for problem \eqref{sblauth:eq:navier_stokes}.}
	\label{sblauth:fig:geometries}
\end{figure}
\begin{figure}
	\centering
	\begin{subfigure}[t]{\textwidth}
		\centering
		\includegraphics[width=0.8\textwidth]{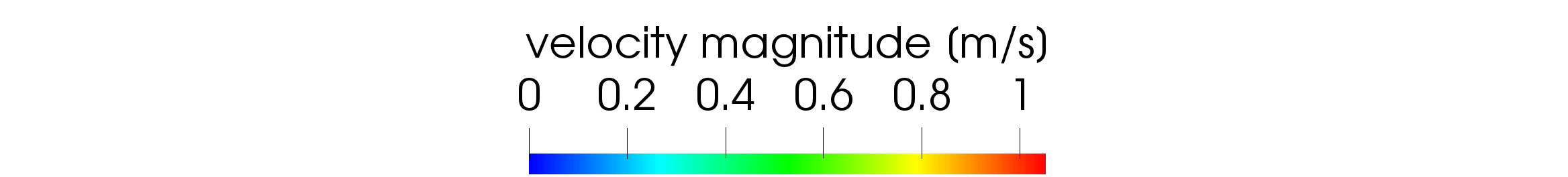}
	\end{subfigure}
	\\[0.25cm]
	\begin{subfigure}[t]{0.49\textwidth}
		\centering
		\includegraphics[width=\textwidth]{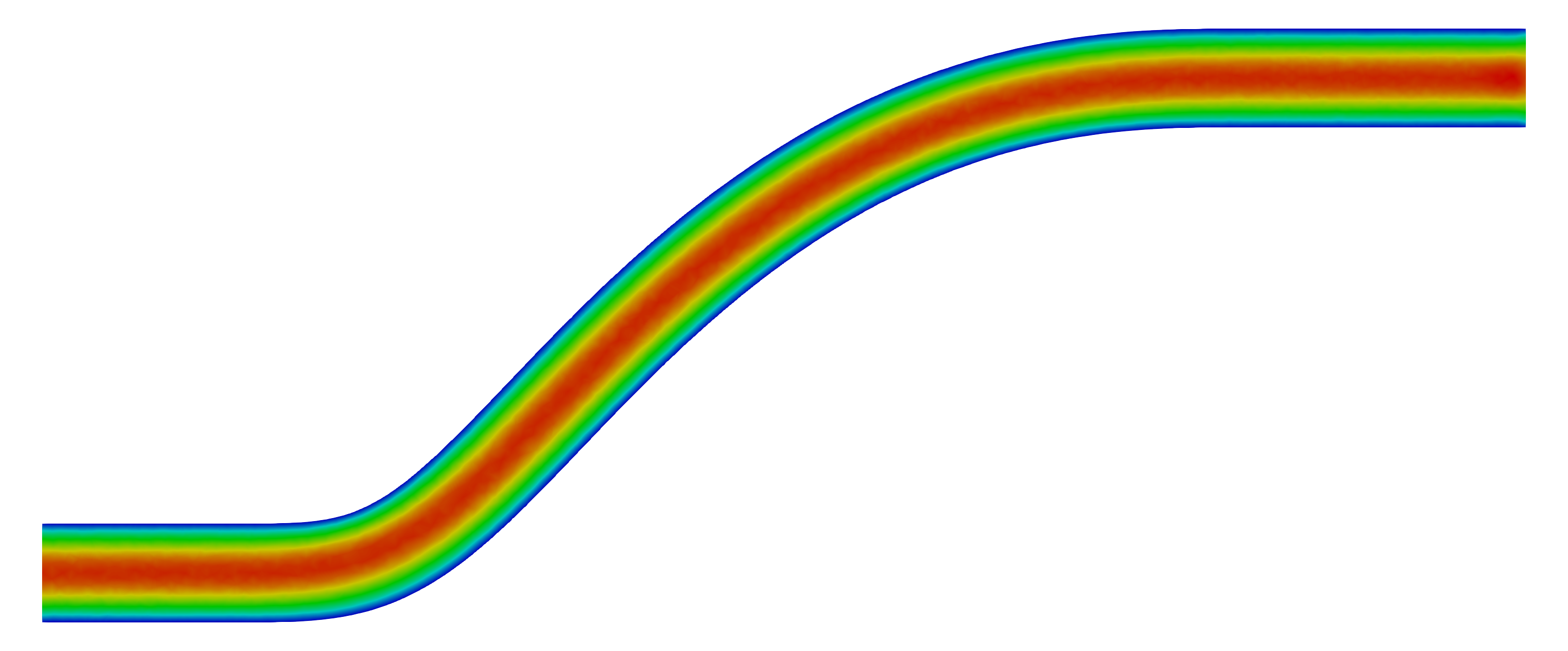}
		\caption{Velocity magnitude on the initial geometry.}
		\label{sblauth:fig:pipe_velo_initial}
	\end{subfigure}
	\hfil
	\begin{subfigure}[t]{0.49\textwidth}
		\centering
		\includegraphics[width=\textwidth]{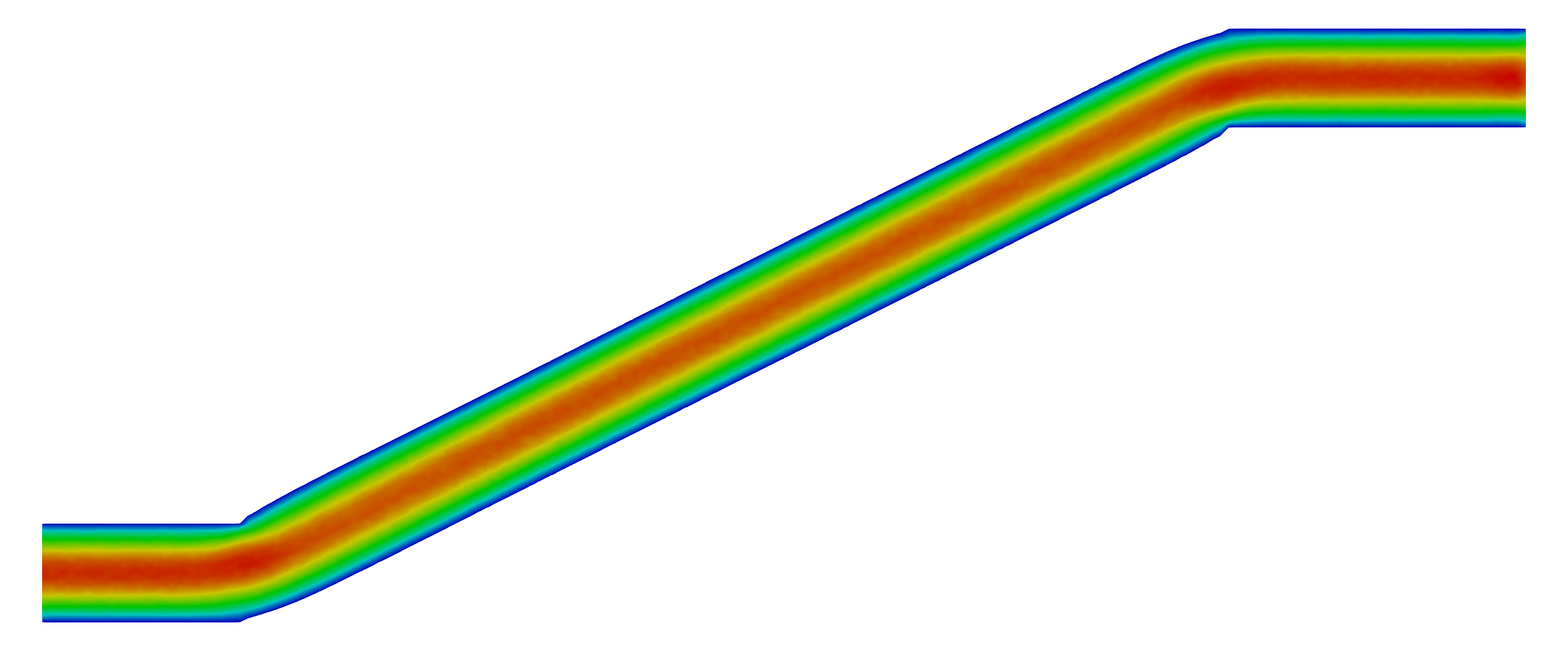}
		\caption{Velocity magnitude on the optimized geometry (obtained with the NCG HS method).}
		\label{sblauth:fig:pipe_velo_optimized}
	\end{subfigure}
	\caption{Velocity magnitude on the initial and optimized geometries, shown as slice through the middle of the geometry.}
	\label{sblauth:fig:pipe_velo}
\end{figure}
Here, $u$ denotes the flow velocity, $p$ the pressure and $\varepsilon(u) = \nicefrac{1}{2} (\nabla u + \nabla u^T)$ is the symmetric gradient of $u$. Note, that the cost functional consists of two terms, where the first one measures the energy dissipation in the pipe and the second one is a regularization of a volume equality constraint. The latter is used to constrain the volume of the pipe to its initial volume. Moreover, the pipe's boundary $\Gamma$ is partitioned into the inlet $\Gamma^\mathrm{in}$ and the wall boundary $\Gamma^\mathrm{wall}$, where we use Dirichlet boundary conditions, as well as the outlet $\Gamma^\mathrm{out}$, where we use a do-nothing boundary condition. We discretize the geometry with \num{17873} nodes and \num{82422} tetrahedrons. Additionally, we discretize the Navier-Stokes equations with the inf-sup-stable Taylor-Hood elements, i.e., piecewise quadratic Lagrange elements for the velocity and piecewise linear Lagrange elements for the pressure. Note, that a plot of the initial and optimized geometries is shown in Fig.~\ref{sblauth:fig:geometries}, and that the velocity magnitude on these domains is visualized in Fig.~\ref{sblauth:fig:pipe_velo}.

As before, we solve this shape optimization problem with the gradient descent, L-BFGS 5, and the NCG methods, where we consider the case of laminar flow and use a Reynolds number of $\mathrm{Re} = 1$ as well as a penalty parameter of $\gamma = 100$. Note, that the choice of $\gamma$ is sufficiently large to ensure a relative volume difference below 0.5~\% between the initial and optimized geometries for all methods, so that the equality constraint is satisfied numerically. The corresponding results of the optimization are shown in Fig.~\ref{sblauth:fig:pipe_history}, where, again, the history of the cost functional (cf.\ Fig.~\ref{sblauth:fig:pipe_functional}) and the relative gradient norm (cf.\ Fig.~\ref{sblauth:fig:pipe_norm}) are shown. Here, we again observe that the NCG methods are very efficient. The best performing NCG method is, again, the one of Dai and Yuan (DY), which even slightly outperformed the L-BFGS 5 method. Additionally, the method of Hestenes and Stiefel (HS) also performed very well and was only slightly worse than the L-BFGS method. The remaining NCG variants performed a bit worse, but all of them were substantially better than the gradient descent method as they required less than half the amount of iterations to reach the prescribed relative tolerance for this problem.

\begin{figure}[t]
	\centering
	\begin{subfigure}[t]{0.49\textwidth}
		\centering
		\includegraphics[width=\textwidth]{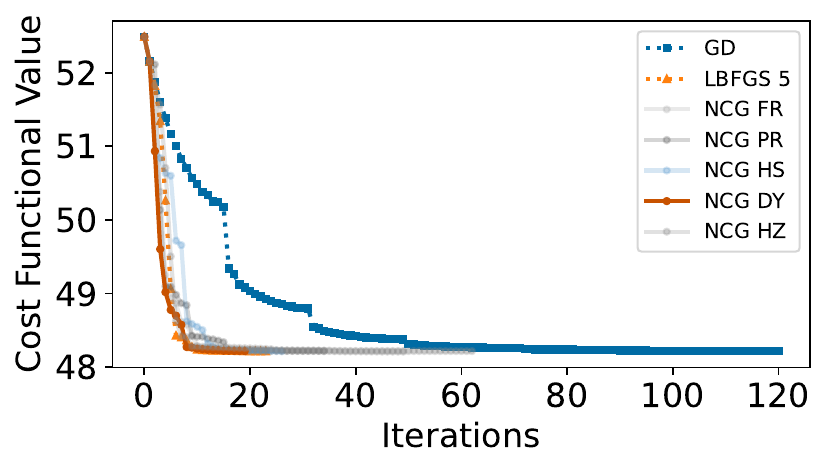}
		\caption{Evolution of the cost functional.}
		\label{sblauth:fig:pipe_functional}
	\end{subfigure}
	\hfil
	\begin{subfigure}[t]{0.49\textwidth}
		\centering
		\includegraphics[width=\textwidth]{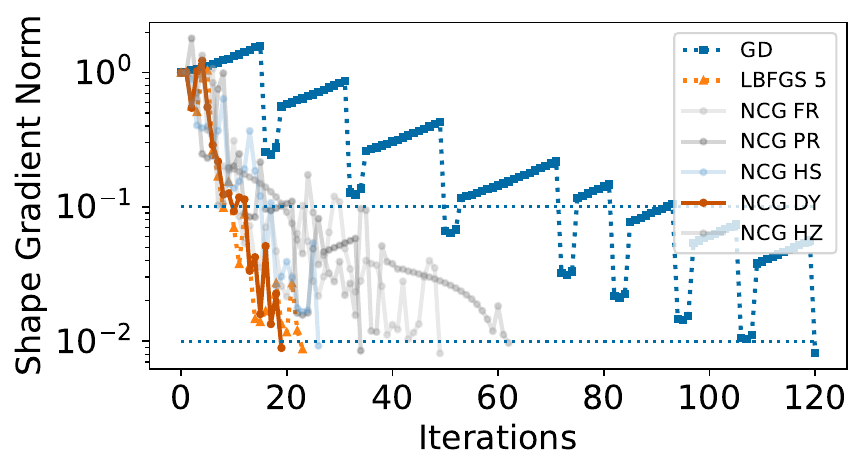}
		\caption{Evolution of the relative gradient norm.}
		\label{sblauth:fig:pipe_norm}
	\end{subfigure}
	\caption{History of the optimization methods for problem~\eqref{sblauth:eq:navier_stokes}.}
	\label{sblauth:fig:pipe_history}
\end{figure}

\section{Conclusions}
\label{sblauth:sec:conclusion}

In this chapter, we have presented and investigated the nonlinear conjugate gradient (NCG) methods for shape optimization from \cite{Blauth2021Nonlinear}. After recalling recent results from shape optimization and shape calculus, we formulated the NCG methods in the Riemannian setting for shape optimization introduced in \cite{Schulz2014Riemannian, Schulz2016Efficient}. Afterwards, we investigated these methods numerically and compared them to the already established gradient descent and L-BFGS methods for shape optimization. The results show that the NCG methods perform substantially better than the popular gradient descent method and that their performance is comparable to the one of the L-BFGS methods from \cite{Schulz2016Efficient}. Hence, the NCG methods could be particularly interesting for large-scale industrial problems due to their efficiency and low memory requirements.

\bibliographystyle{spmpsci}
\bibliography{literature_db.bib}

\end{document}